\numberwithin{equation}{section}
\theoremstyle{definition}
\newtheorem{remark}{Remark}[section]
\newtheorem{definition}[remark]{Definition}
\theoremstyle{plain}
\newtheorem{theorem}[remark]{Theorem}
\newtheorem{lemma}[remark]{Lemma}
\newtheorem*{thmA}{Theorem A}
\newcommand{\comment}[1]{}            
\newcommand{\mt}[1]{{\text{\rm #1}}}  
\newcommand{\R}{\mathbb{R}}           
\newcommand{\N}{\mathbb{N}}           
\renewcommand{\Im}{\mathop{\mt{Im}}}
\DeclareMathOperator{\supp}{\mt{supp}}
\renewcommand{\ln}{\operatorname{\mt{ln}}}
\renewcommand{\exp}{\operatorname{\mt{exp}}}
\renewcommand{\min}{\operatorname*{\mt{min}}}
\renewcommand{\lim}{\operatorname*{\mt{lim}}}
\renewcommand{\inf}{\operatorname*{\mt{inf}}}
\DeclareMathOperator{\id}{\mt{id}}             
\DeclareMathOperator{\pr}{\mt{pr}}             
\DeclareMathOperator{\Ric}{\mt{Ric}}
\newcommand{\SecondFF}{\operatorname{\mathit{II}}}    
\DeclareMathOperator{\inj}{\mt{inj}}
\newcommand{\p}{\partial}
\newcommand{\Rm}{\operatorname{Rm}}
\newcommand{\V}{\noindent}
\newcommand{\benu}{\begin{enumerate}}
\newcommand{\eenu}{\end{enumerate}}
\begin{document}
\vspace*{-6mm}
\title{Cheeger-Gromov compactness for manifolds with boundary}

\author{Olaf M\"uller}
\address{Department of Mathematics\\
Humboldt-Universit\"at, Germany}
\email{mullerol@math.hu-berlin.de}
\begin{abstract}

We prove Cheeger-Gromov convergence for a subsequence of a given sequence of manifolds-with-boundary of bounded geometry. The method of the proof is to reduce, via height functions, the problem to the setting of Hamilton's compactness theorem for manifolds without boundary. 
\end{abstract}  
\maketitle

\vspace*{-8mm}

\section{Introduction and statement of the main result}
\subsection{Motivation}
Let $i \mapsto (M_i,g_i,x^0_i)$ be a sequence of pointed connected Riemannian $n$-dimensional
manifolds\footnote{Throughout this article, the term 'manifold' includes manifolds with boundary.} and we ask whether it {\em subconverges} to a smooth pointed smooth Riemannian manifold $(M_\infty, g_\infty, x^0_\infty)$, i.e., whether a subsequence converges in the Cheeger-Gromov sense to
\begin{equation*}
\displaystyle (M_{\infty},g_{\infty},x^0_{\infty})=  \lim_{i\to \infty}(M_i,g_i,x^0_i)
\end{equation*}

($M_\infty$ might be noncompact even if all $M_i$ are
compact and have non-empty non-compact boundary). This is essentially the question which subsets of the set of Riemannian manifolds are precompact, a question that leads to statements like finiteness of the set of diffeomorphism classes of manifolds with bounds on sectional curvature, diameter and volume, in Cheeger's famous result \cite{jC} that set the ground for this area of geometry. Subsequently, the use of harmonic coordinates (whose existence in uniformly large balls had been shown by Jost and Karcher \cite{JK} under the assumptions of bounds on the sectional curvature) replaced the one of geodesic coordinates, and allowed Peters \cite{sP} and Greene-Wu \cite{GW} to obtain results of better regularity. An interesting recent result in this context is the one by Portegies \cite{jP}, who reproduced Jost's and Karcher's result under the mere assumption of Ricci bounds. For nice accounts on
parts of the subject see \cite{rP},\cite{cS}.

Most results in this context either fix the topology a priori (like the interesting approach taken by Allen and Perales in \cite{AP}) or conclude
Gromov-Hausdorff subconvergence to a metric space (like in Wong's results \cite{W1}, \cite{W2}, which allow concluding subconvergence to an Alexandrov space with convex boundary) or subconvergence in a
space of currents instead of $C^k$-convergence to a Riemannian
manifold as above. An important result \cite{CC}, due to Cheeger and
Colding, shows that the subset of regular points of such a limit, that
is, of points around which the space looks like a manifold, has
full $n$-Hausdorff measure. There are some results that conclude
convergence of a subsequence to a Riemannian manifold: Gromov
\cite{mG} assumes compactness, uniformly bounded diameter and
nonnegative Ricci curvature resp. uniformly bounded sectional
curvature and $1/{\rm vol}$ and concludes Lipschitz resp. metric
convergence to a $C^{1, \alpha}$ resp. $C^0$ metric; Anderson
\cite{mA} shows $C^{1,\alpha}$-precompactness of the class of compact
connected Riemannian $n$-manifolds of bounded Ricci curvature and
diameter and injectivity radius bounded from $0$; the result of
Hamilton \cite{rsH}, enlisted in this article as Theorem \ref{Bamler-thm}, for (compact or non-compact) manifolds without boundary states that under appropriate bounds there is $C^k$-precompactness in the class of open manifolds as well, in a sense specified below. 

Focusing further on $C^k$-convergence of manifolds with nonempty boundary, we find this issue addressed only by the works of Kodani \cite{sK}, Knox
\cite{kK} and Anderson-Katsuda-Kurylev-Lassas-Taylor \cite{AKKLT}. The main results of these three articles assume compactness of the manifolds and a
uniform diameter bound and conclude subconvergence to a Riemannian manifold of at most $C^{1,\alpha}$ regularity. In comparison, Theorem A of the present article assumes, for any desired $k \in \N$, a $C^{k+1}$ bound on sectional curvature\footnote{It is easy to construct counterexamples showing that e.g. without a $C^1$-bound on curvature one cannot obtain $C^3$-subconvergence: There are $g_n := f_n \cdot g$ with locally supported conformal factors $f_n$ such that $R^{g_n}$ is uniformly bounded but $ \vert d {\rm scal}^{g_n} \vert_{g_n} \rightarrow_{n \rightarrow \infty} \infty $, something that cannot happen under $C^3$ convergence of the metric.}, neither assumes any diameter bound nor keeps the topology fixed and concludes $C^k$-subconvergence. Knox (\cite{kK}, Th. 2.1) and Anderson et al. (\cite{AKKLT}, Th. 3.1.1) indicate the possibility to adapt Petersen's \cite{pP} elegant account of Cheeger's result \cite{jC} to manifolds with nonempty boundary to obtain such $C^k$ results. Not doubting that their proof sketches can be made rigorous, the author encountered unexpected technical problems when working on the details. Among others, it is not clear at which point to replace the uniform diameter bound in \cite{pP}. Instead, in this article we follow the strategy to reduce the problem to the one without boundary by an extension operator defined in Theorem \ref{MetricExtension}, embedding manifolds with boundary isometrically into complete closed manifolds while preserving geometric $C^k$ bounds, which could be of independent interest. We build upon results of Schick \cite{Schick} on existence of good atlases and Seeley \cite{rS} on extension of functions on half-spaces.

\subsection{Bounded geometry and main result} 
For a Riemannian metric $h$, we denote by $\Rm_h$ its Riemannian
tensor, and by $\inj_h$ its injectivity radius.  Let
$(M,g,x^0)$ be a pointed connected Riemannian manifold. If $\p M \neq \emptyset$, we denote by $\p
    g=g|_{\p M}$ the induced metric. By $d^g$ we denote the distance
  function induced by the metric $g$, and for a metric space $M$ and $A \subset M$, we write $B(A,r)=\{ \ x \in M \ | \ d(x,A)<r \ \}$.  In the following,
  we adopt the following definition of bounded geometry for manifolds
  with in general nonempty boundary (cf., e.g., \cite{Schick}):
  
  \medskip
  
\begin{definition}\label{def:bounded-geometry}
Fix $k \in \N \cup \{ \infty \}$ and $ c>0$.  A $C^{k+2}$
Riemannian manifold $(M,g)$ has
{\bf $( c,k)$-bounded geometry} if\footnote{From work of Ammann-Gro\ss e-Nistor \cite{AGN} we know that the second condition follows from the other ones.}

\benu
\item[(i)] for the inward normal vector field $\nu$, the normal
  exponential map $E: \p M \times [0, c^{-1}] \rightarrow
  M$, $E(y,r) := \exp_y (r \nu)$, is a diffeomorphism onto its
  image;
\item[(ii)] $\inj_{\p g}(\p M) \ \geq \ 
  c^{-1} $;
\item[(iii)] $\inj_g (M \setminus B (\p M,r)) \geq r $ for all $r \leq
   c^{-1}$;
\item[(iv)] $\vert \nabla_g^l \Rm_g \vert_g
  \leq c$ for all $l \leq k$; 
\item[(v)] $\vert \nabla_{ \p g}^l \SecondFF_g
  \vert_g \ \leq \  c$ for all $l \leq k$, where $\SecondFF$ is the second fundamental form of the boundary.  \eenu
\noindent
For a pointed Riemannian manifold $(M,g,x^0)$ with basepoint $x^0$ we also require 
$d^g(x^0, \partial M) \geq 2 c^{-1}$.
\end{definition}

\begin{remark}
If $(M,g)$ has $(c,k)$-bounded geometry, so has $(\p M, \p g)$,
see \cite{Schick}.\\ In the case $\p M =\emptyset$, Conditions (i),(ii) and (v) are empty, and the condition (iii) reads $\inj_g
 (M)\geq c^{-1}$.
\end{remark}

Our main result is:

\begin{thmA}[Precompactness result]
Let $i \mapsto (M_i, g_i, x^0_i)$ be a sequence of pointed Riemannian
manifolds of dimension $n$ of $( c,k+2)$-bounded
geometry. Then the sequence $\{(M_i, g_i, x^0_i)\}$
$C^k$-subconverges to a pointed manifold $(M_{\infty},
g_{\infty}, x^0_{\infty})$ of $(c,k)$-bounded geometry.\\ If furthermore $\{d_i (x^0_i, \p M_i) \vert i \in \N \}$ is bounded, then $(M_{\infty}, g_{\infty}, x^0_{\infty})$ has non-empty boundary.
\end{thmA}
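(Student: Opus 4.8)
The plan is to adapt the classical chart-patching proof of Cheeger--Gromov compactness for manifolds without boundary (Hamilton \cite{rsH}): use harmonic coordinates in the interior, replace them by boundary normal (Fermi) coordinates in a uniform collar of $\p M_i$, and then run the usual net-and-diagonal argument.

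\emph{Uniform charts.} Fix $r_0 = r_0(c,n) > 0$ small. On $M_i \without B(\p M_i, r_0)$, conditions (iii) and (iv) give a uniform lower injectivity radius bound and curvature bounds, so standard harmonic radius estimates produce, around each such point, a harmonic chart on a ball of definite radius in which $\tfrac12\delta \le g_i \le 2\delta$ and $\norm{g_i}_{C^{k+1,\alpha}} \le C(c,n,k)$; here only $(c,k)$-bounded geometry enters, which is why the empty-boundary case costs one derivative less. Near $\p M_i$ we instead combine harmonic charts on $(\p M_i,\p g_i)$, which has bounded geometry of the relevant order by the Remark, with the normal exponential map $E$, which by (i) is a diffeomorphism on $\p M_i \times \cci{0}{c^{-1}}$; on the resulting half-ball charts $g_i = dr^2 + h_r$ with $h_0 = \p g_i$ controlled by the boundary chart, the first normal derivative of $h_r$ controlled by $\SecondFF_{g_i}$ through (v), and the higher normal derivatives controlled by integrating the Riccati/Jacobi ODE for $h_r$, which couples (iv) and (v) --- yielding $\norm{g_i}_{C^{k,\alpha}} \le C(c,n,k)$ on a half-ball of definite size, and this is precisely where the extra derivative in the $(c,k+1)$-hypothesis is consumed. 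On overlaps of definite size the transition maps among all these charts are bounded in $C^{k+1,\alpha}$ by $C(c,n,k)$, and transitions between two boundary charts preserve $\smallset{r=0}$.

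\emph{Net and diagonal subsequence.} For each $i$ choose a maximal $r_0$-separated set $\smallset{p_{i,j}}_j$ in $M_i$ with $p_{i,0}$ nearest $x^0_i$, ordered by breadth-first search from $p_{i,0}$, identifying the index set with an initial segment of $\N$ independent of $i$. Bounded geometry forces the incidence graph of the covering charts to have uniformly bounded degree (disjoint half-balls $B(p_{i,j},r_0/2)$ of definite volume packed into a chart of definite volume), and $d_i(x^0_i, p_{i,j}) \le R$ forces $p_{i,j}$ within graph-distance $O(R/r_0)$ of $p_{i,0}$, so $\#\smallset{j : d_i(x^0_i,p_{i,j}) \le R}$ is bounded by some $N(R)$ uniform in $i$. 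Pass to a subsequence so that for each pair $(j,l)$: the type (interior or boundary) of $p_{i,j}$ stabilises; the relation ``chart $j$ meets chart $l$'' stabilises; and, by Arzel\`a--Ascoli on the uniformly bounded data above, the metric components and the transition maps converge in $C^{k,\beta}$ for every $\beta < \alpha$.

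\emph{Passage to the limit and Cheeger--Gromov convergence.} Glue $M_\infty$ from model balls (interior labels) and model half-balls (boundary labels) along the limiting transition maps; the cocycle identity, the Hausdorff and bordered-manifold properties, and the identification of $\p M_\infty$ with the union of the images of the $\smallset{r=0}$-faces all pass to the limit from the quantitative estimates. The metric components glue to a $C^k$ metric $g_\infty$ which, together with the limiting collars, satisfies (i)--(v) to order $k$ for some $c' = c'(c,n,k)$; put $x^0_\infty$ for the limit center of chart $0$, noting $d^{g_\infty}(x^0_\infty, \p M_\infty) \ge 2c^{-1}$ passes to the limit. Exhausting $M_\infty$ by $U_m$ (the first $m$ chart images) and patching inverse chart maps by a partition of unity carried out in charts gives, for $i \ge i(m)$, diffeomorphisms $F_{i,m}\colon U_m \to M_i$ onto open sets with $F_{i,m}(x^0_\infty) = x^0_i$, $F_{i,m}(U_m \cap \p M_\infty) \subseteq \p M_i$, and $F_{i,m}^*g_i \to g_\infty$ in $C^k(U_m)$; a diagonal choice $F_i := F_{i,m(i)}$ yields the asserted $C^k$-subconvergence to a limit of bounded geometry to order $k$.

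\emph{Non-empty boundary, and the main obstacle.} If $d_i(x^0_i,\p M_i) \le D$ for all $i$, pick $q_i \in \p M_i$ with $d_i(x^0_i,q_i) \le D$; the net point $p_{i,j_i}$ nearest $q_i$ has $d_i(p_{i,j_i},\p M_i) \le r_0$, hence is of boundary type, with $d_i(x^0_i,p_{i,j_i}) \le D + r_0$. As $j_i$ takes at most $N(D+r_0)$ values, some label $j_*$ recurs; along that subsequence the model chart $j_*$ is a half-ball sitting inside $M_\infty$, so $\p M_\infty$ contains the image of its $\smallset{r=0}$-face and is non-empty. The one genuinely new ingredient is the boundary case of the first step --- producing Fermi charts whose size and metric bounds depend only on $(c,n,k)$, controlling their overlaps with each other and with the interior harmonic charts, and checking that the glued limit is an honest manifold with boundary rather than singular along $\p M_\infty$; this is where the collar Riccati analysis and the extra derivative in the hypothesis are needed, while the remaining steps are routine adaptations of the boundaryless argument.
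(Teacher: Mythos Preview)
Your outline is correct but follows a genuinely different route from the paper. You redo Hamilton's chart-patching argument directly in the bordered category, producing uniform Fermi half-ball charts near $\p M_i$ via the Riccati/Jacobi ODE and then running the net-and-diagonal machine with two chart types, gluing the limit directly as a manifold with boundary. The paper instead \emph{reduces to the boundaryless case}: it extends each $(M_i,g_i)$ across $\p M_i$ to a complete open $(X_i,\bar g_i)$ of uniformly bounded geometry carrying a controlled height function $f_i$ with $M_i = f_i^{-1}(\cci{0}{1})$ (Theorem~\ref{MetricExtension}, via Seeley's half-space extension operator applied to the matrix logarithm of the metric in collar charts, so that positive-definiteness and two-sided bounds survive), applies Hamilton's theorem as a black box to the $X_i$, passes the $f_i$ through Arzel\`a--Ascoli to a limiting height function $f_\infty$, and finally cuts at $f_\infty^{-1}(0)$ (Proposition~\ref{cutting}), manufacturing the comparison diffeomorphisms by flowing along $\grad\tilde f_i$ to align the zero sets and interpolating geodesically with the identity in the interior. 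Your approach is more self-contained and makes the derivative loss visible (no ellipticity gain in Fermi coordinates), at the price of re-proving Hamilton and of having to check by hand that the glued limit is an honest bordered manifold and that the partition-of-unity averaging respects the boundary; the paper's approach black-boxes Hamilton entirely, so the only new analysis is the controlled extension and the gradient-flow cutting, with the extra derivative spent when Arzel\`a--Ascoli on $C^{k+1}$-bounded height functions yields only $C^k$ convergence. Even the non-empty-boundary arguments differ: yours tracks a recurring boundary-type chart label among the finitely many within reach, while the paper simply observes that $f_\infty$ takes both signs near $x^0_\infty$ and hence has nonempty zero set.
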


\begin{remark}
 The condition on the distances $d_i$ cannot be omitted: For the sequence $n \mapsto \{(\overline{B(0,n)},ds^2,0)\}$ of closed balls
  $\overline{B(0,n)}$ around $0$ of radius $n$ in Euclidean $\R^m$, the limit manifold has empty boundary.
  \end{remark}

We review classical results in Sec. 2 and prove the Main Theorem in Sec. 3.

\subsection{Acknowledgments.} The author wishes to thank Boris Botvinnik for initiating this project and for important contributions to this article, Bernd Ammann and Nadine Gro\ss e for insightful discussions, and an anonymous referee for valuable comments on a first version of the article.

\section{Notation, Convergence for manifolds with boundary}
\subsection{Conventions, notation, definition of convergence}

We first settle some conventions by $0 \in \N$, $\N^* := \N\setminus \{ 0 \}$ and $\N_n := \{ m \in \N \vert m \leq n \} \  \forall n \in \N$ . Let $(Z,d)$ be a metric space, and $Y\subset Z$. For $r>0$ let $B(Y,r)$ be the $d$-ball of radius $r$ around $Y$ in $Z$, and $B(y,r):= B(\{y\},r)$. When it matters to emphasize an ambient space $Z$ or a metric $d$, we use the notation $B^{Z}(y,r)$ resp. $B^d(y,r)$.

If $Z_0,Z_1\subset Z$, then the \emph{Hausdorff distance}
$d(Z_0,Z_1)$ is defined as
\begin{equation*}
d(Z_0,Z_1) = \inf\{ \ r>0 \ | \ Z_0\subset B(Z_1,r)  \land  Z_1\subset
B(Z_0,r) \ \},
\end{equation*}

which generalizes $d$ in the sense that $d(\{ p \}, \{q \}) = d(p,q)$. Balls and distances in a Riemannian manifold $(M,g)$ always refer to the geodesic distance $d^g$. We say (see e.g. \cite{CGT}, \cite{aK}, \cite{jJ}) that a sequence $i \mapsto (M_i,g_i,x^0_i)$ of pointed Riemannian $C^k $ manifolds {\em $C^k$-converges to} a pointed Riemannian $C^k$ manifold
$(M_{\infty},g_{\infty},x_{\infty})$ if and only if for all
$m\in \N \setminus \{ 0 \}$ and for $i$ sufficiently large 
\begin{enumerate}
\item[(a)] there is a an open subset $U_m \supset B (x^0_\infty,m)$ of $M_\infty$ and a diffeomorphism $\phi_i^{(m)} :  U_m \to \phi_i^{(m)} ( U_m) \subset M_i$ with $
  B_m(x_i^0) \subset \phi_i^{(m)} (U_m)$ mapping $x_\infty^0$ to $x_i^0$;
 \item[(b)] the metrics $(\phi^{(m)}_i)^*g_i$ converge to $g_{\infty}\vert_{U_m}$ in
   the $C^k$-norm on $U_m$.
\end{enumerate}

\subsection{Gromov-Hausdorff convergence}

Let $(X,d)$ and $(X',d')$ be metric spaces. A
continuous map $\phi : X\to X'$ is called {\bf $\epsilon$-isometry} if
$||\phi^*d'-d||_{\infty}<\epsilon$.
\begin{definition}\label{def:hausdorff-convergence}
A sequence $i \mapsto (Y_i,d_i,y_i^0)\}$ of pointed proper complete
metric spaces is said to
{\bf GH-converge} to a complete and proper metric pointed space
$(Y_{\infty}, d_{\infty},y^0_{\infty})$ if one of the following
equivalent conditions is satisfied (see \cite[Section 3.1.2]{Bam}):
\begin{enumerate}
\item[(B$'$)] there are sequences $i \mapsto r_i$, $i \mapsto \epsilon_i$ of
positive real numbers, where $r_i\xrightarrow[i \rightarrow \infty]{} \infty$, $\epsilon_i \xrightarrow[i \to \infty]{} 0$,
and $\epsilon_i$-isometries $\phi_i :
B^{Y_{\infty}}_{r_i}(y^0_{\infty}) \to B^{Y_i}_{r_i}(y^0_{i})$ such that
\begin{equation*}  
B_{\epsilon_i}(\Im \phi_i)\supset B^{Y_i}_{r_i}(y^0_{i}) \ \ \ \mbox{and} \ \ \ d_{i}(\phi_i(y^0_{\infty}),y^0_i)< \epsilon_i.
\end{equation*}
\item[(D$'$)] there is a metric space
$(Z,d)$ and isometric embeddings $\iota_i: Y_i \to Z$,
$\iota_{\infty}: Y_{\infty} \to Z$, such that
\begin{enumerate}
\item[(i)] $\displaystyle \lim_{i\to\infty}\iota_i(y^0_i) =
\iota_{\infty}(y^0_{\infty})$,
\vspace{2mm}

\item[(ii)] $\displaystyle \lim_{i\to\infty} d(U\cap
\iota_i(Y_i), U\cap \iota_{\infty}(Y_{\infty})) =0$ for any open
bounded set $U\subset Z$. 
\end{enumerate}
\end{enumerate}  
We use the notation $\displaystyle {\lim_{i\to\infty}}^{\!\!GH}
(Y_i,d_i,y^0_i) = (Y_{\infty}, d_{\infty},y^0_{\infty})$.
\end{definition}
We need the following particular case of more
general results, see, e.g. \cite[Prop. 3.1.2, Th.
3.1.3]{Bam}:
\begin{theorem}\label{thm:GH-convergence}
Let $A: i \mapsto (X_i,g_i,x^0_i)$ be a sequence of pointed complete
$n$-dimensional Riemannian manifolds such that $\Ric_{g_i}\geq
(n-1)\kappa$ for some $\kappa\in \R$ and all $i \in \N$. Then
there is a pointed proper complete metric space
$(Y_{\infty},d_{\infty}, y^0_{\infty})$ such that the sequence
$A$ GH-subconverges to $(Y_{\infty},
d_{\infty},y^0_{\infty})$.
\end{theorem}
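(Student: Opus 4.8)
The plan is to reduce the statement to Gromov's precompactness criterion for pointed proper metric spaces, the only analytic ingredient being the Bishop--Gromov volume comparison theorem. First I would recall that under the hypothesis $\Ric_{g_i}\geq(n-1)\kappa$ the relative volume comparison theorem gives, for every $x\in X_i$ and all $0<r\leq R$,
\[
\frac{\vol_{g_i}B(x,R)}{\vol_{g_i}B(x,r)}\;\leq\;\frac{v_{\kappa,n}(R)}{v_{\kappa,n}(r)},
\]
where $v_{\kappa,n}(\rho)$ denotes the volume of a metric ball of radius $\rho$ in the simply connected $n$-dimensional model space of constant curvature $\kappa$. A standard packing argument then turns this into the purely metric statement that any $\eps$-separated subset of $B(x^0_i,R)$ has at most $N(R,\eps):=v_{\kappa,n}(3R)/v_{\kappa,n}(\eps/2)$ elements, a bound independent of $i$: the disjoint balls $B(p_j,\eps/2)$ around such points lie in $B(x^0_i,R+\eps/2)$, and each of them has volume bounded below by a fixed fraction of $\vol_{g_i}B(x^0_i,R+\eps/2)$, obtained by applying the comparison inequality at $p_j$. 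Hence for each fixed $R$ the family $\{B(x^0_i,R)\}_i$ is uniformly totally bounded.

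Next I would use Hopf--Rinow: each $(X_i,g_i)$ is a proper complete length space, so every closed ball $\overline{B(x^0_i,R)}$ is a compact pointed metric space. For a fixed $R$, Gromov's compactness theorem for metric spaces --- a uniformly totally bounded sequence of compact pointed metric spaces subconverges in the Gromov--Hausdorff topology --- then applies to $\{(\overline{B(x^0_i,R)},d^{g_i},x^0_i)\}_i$. I would then run a diagonal argument over the radii $R=1,2,3,\dots$: extract nested subsequences, one for each integer $m$, and pass to the diagonal, along which $\overline{B(x^0_i,m)}$ Gromov--Hausdorff converges to a compact pointed space $(Y^{(m)},d^{(m)},y^{(m)})$ for \emph{every} $m$. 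These limits should be compatible: for $m<m'$ the $\eps$-isometries approximating the radius-$m'$ convergence, restricted to the radius-$m$ balls and with $\eps\to0$, ought to induce a canonical isometric embedding $Y^{(m)}\hookrightarrow Y^{(m')}$ onto the closed $m$-ball about $y^{(m')}$, and these embeddings should commute. Taking $(Y_\infty,d_\infty,y^0_\infty)$ to be the completion of the increasing union $\bigcup_m Y^{(m)}$ then yields a pointed metric space that is complete by construction and proper because its closed $m$-ball is the compact set $Y^{(m)}$.

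Finally I would check that the diagonal subsequence GH-subconverges to $(Y_\infty,d_\infty,y^0_\infty)$ in the sense of Definition \ref{def:hausdorff-convergence}: the $\eps_i$-isometries $\phi_i\colon B^{Y_\infty}_{r_i}(y^0_\infty)\to B^{X_i}_{r_i}(x^0_i)$ with $r_i\to\infty$, $\eps_i\to0$ are obtained by patching the ball-wise almost-isometries coming from $\overline{B(x^0_i,m)}\to Y^{(m)}$, and the density requirement $B_{\eps_i}(\Im\phi_i)\supset B^{X_i}_{r_i}(x^0_i)$ is precisely the surjectivity-up-to-$\eps$ that is built into ball-wise GH-convergence. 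The hard part will be this last bookkeeping step: turning $\{Y^{(m)}\}$ into a genuinely coherent direct system (so that the embeddings $Y^{(m)}\hookrightarrow Y^{(m')}$ exist and commute) and verifying that the patched maps satisfy the precise quantitative conditions of Definition \ref{def:hausdorff-convergence} rather than some a priori weaker notion of convergence. One must also be mildly careful that a GH-limit of the balls $\overline{B(x^0_i,m)}$ need not literally be a ball in $Y_\infty$ --- a point at distance exactly $m$ can behave badly under the limit --- which is why I would work with the direct limit and its completion instead of trying to pin $Y^{(m)}$ down as a ball on the nose; apart from the volume-comparison input everything is soft metric geometry, and the Ricci lower bound is used only in the first step.
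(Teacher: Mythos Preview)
The paper does not give its own proof of this theorem: it is stated as a quoted result, with the line ``We need the following particular case of more general results, see, e.g.\ \cite[Prop.~3.1.2, Th.~3.1.3]{Bam}'' immediately preceding the statement, and no argument following it. So there is no in-paper proof to compare against.

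Your outline is the standard one --- Bishop--Gromov volume comparison to get uniform total boundedness of metric balls, Gromov's precompactness theorem on each fixed radius, then a diagonal extraction and a direct-limit construction of $(Y_\infty,d_\infty,y^0_\infty)$ --- and this is precisely the route taken in the cited reference. Your identification of the delicate points (coherence of the system $Y^{(m)}\hookrightarrow Y^{(m')}$, the fact that GH-limits of closed $m$-balls need not be literal $m$-balls) is accurate, and your proposed workaround via the completed direct limit is the usual fix. Nothing is missing from the strategy; for the purposes of this paper a one-line citation would suffice.
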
  
\subsection{Smooth Cheeger-Gromov convergence}
Let $i \mapsto (X_i,g_i,x^0_i)$ be a sequence of pointed complete Riemannain
manifolds of dimension $n$ which GH-converges to a metric space
$(Y_{\infty},d_{\infty},y_{\infty})$ as in Definition
\ref{def:hausdorff-convergence}.  If the metric space
$(Y_{\infty},d_{\infty},y^0_{\infty})$ is derived from a
Riemannian manifold $(X_{\infty},g_{\infty},x^0_{\infty})$, we use the slightly abusive notation:
$(Y_{\infty},d_{\infty},y^0_{\infty})=(X_{\infty},g_{\infty},x^0_{\infty})$.
\begin{definition}[from \cite{Bam}, Sec. 3.2.1]\label{def:smooth-convegence}
Assume that a sequence $\{(X_i,g_i,x^0_i)\}$ GH-converges to a complete
Riemannian manifold $(X_{\infty},g_{\infty},x^0_{\infty})$.  Then the
sequence {\bf $\{(X_i,g_i,x^0_i)\}$ $C^k$-converges to
$(X_{\infty},g_{\infty},x^0_{\infty})$} if there is an exhaustion of
$X_{\infty}$ by open sets $U_j$, i.e.,
\begin{equation*}
U_1\subset \cdots \subset U_j\subset \cdots \subset X_{\infty},
\ \ \ X_{\infty} = \bigcup_{j}U_j,
\end{equation*}  
and there are diffeomorphisms onto their image $\phi_j: U_j\to M_j$
such that $\phi_j \to Id_{X_{\infty}}$ pointwise, and
\begin{equation*}
\phi^*_j g_j \to_{j \to \infty} g_{\infty} \ \
\end{equation*}
in the pointwise $C^k$ topology, i.e., there is a point-wise\footnote{{\color{red} It is quite obvious from bamler's proof that it is possible to upgrade pointwise convergence in his result to uniform convergence on compacta for any fixed Riemannian metric.}} convergence $\phi^*_j g_j \to g_{\infty}$
and $\nabla^{\ell} \phi^*_j g_j \to \nabla^{\ell} g_{\infty}$ for all
$\ell=1,\ldots,k$, where $\nabla$ denotes the Levi-Civita connection
of the metric $g_{\infty}$ on $X_{\infty}$.
\end{definition}
\begin{remark}
Without loss of generality, we assume that $U_j = B(x^0_{\infty}, j) \subset X_{\infty}$ for all $j \in \N \setminus \{ 0 \}$.
\end{remark}
Bamler  (\cite[Theorem 3.2.4]{Bam}) gives a detailed proof of
a Cheeger-type result without a diameter bound:

\begin{theorem}\label{Bamler-thm}{\rm (cf. R. Hamilton \cite{rsH})}
Let $k \in \N \cup \{ \infty \} $, $k \geq 3 $. Let $i \mapsto (X_i,g_i,x_i)$ be a sequence of pointed complete $C^{k+1}$ Riemannian
manifolds of dimension $n$. Assume that $\inj_{g_i}\geq 
c^{-1}$ and $| \nabla^{\ell}\Rm_{g_i} |\leq  c$ for all
$\ell \in \N_{k+1}$ (i.o.w., assume that the sequence is uniformly $(c,k+1)$-bounded). Then the sequence $\{(X_i,g_i,x_i)\}$
$C^k$-subconverges to a pointed complete Riemannian $C^k$ manifold
$(X_{\infty},g_{\infty},x^0_{\infty})$ of dimension $n$ that is $(c,k)$-bounded.
\end{theorem}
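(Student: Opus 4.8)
The plan is to produce the limit first as a Gromov--Hausdorff limit, which is available from the Ricci bound the hypotheses imply, and then to promote that to $C^k$-smooth convergence by harmonic-coordinate a priori estimates. Since $|\Rm_{g_i}|\le c$ forces $\Ric_{g_i}\ge-(n-1)c\,g_i$, Theorem~\ref{thm:GH-convergence} furnishes a subsequence, not relabelled, that GH-subconverges to a proper complete pointed metric space $(Y_\infty,d_\infty,y^0_\infty)$. What remains is to show that $Y_\infty$ is a $C^k$ Riemannian manifold with $d^{g_\infty}=d_\infty$, and that the GH-subconvergence is smooth of order $k$ in the sense of Definition~\ref{def:smooth-convegence}; one then writes $(X_\infty,g_\infty,x^0_\infty)\define(Y_\infty,d_\infty,y^0_\infty)$.

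The analytic input is the harmonic radius estimate (Anderson; Jost--Karcher): from $|\Rm_{g_i}|\le c$ and $\inj_{g_i}\ge c^{-1}$ there is $r_0=r_0(n,c)>0$ such that around every point $p$ of every $X_i$ there is a harmonic chart on the metric ball $B(p,r_0)$ in which the coefficients $g_{ab}$ are $\tfrac12$-close to $\delta_{ab}$ and $\|g_{ab}\|_{C^{1,\alpha}}\le C_0(n,c)$ for some fixed $\alpha\in\ooi01$. Inserting this into the elliptic equation for the metric in harmonic coordinates, $-\tfrac12 g^{cd}\p_c\p_d g_{ab}=(\Ric_g)_{ab}+Q_{ab}(g,\p g)$ with $Q$ quadratic in $\p g$, and bootstrapping against $|\nabla^\ell\Rm_{g_i}|\le c$ for $\ell\le k$, Schauder estimates improve this to a uniform bound $\|g_{ab}\|_{C^{k+1,\alpha}}\le C_k(n,c)$ on $B(p,r_0/2)$. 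Overlap maps between two such charts are harmonic with bounded gradient, hence uniformly bounded in $C^{k+2,\alpha}$, and Bishop--Gromov volume comparison bounds, uniformly, the number of charts of a cover by charts centred at a maximal $(r_0/10)$-net that meet any fixed one.

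Next, for each $j$ and all $i$ large, the $\epsilon_i$-isometry of Definition~\ref{def:hausdorff-convergence}\,(B$'$) carries a maximal $(r_0/10)$-net of $B(x^0_\infty,j+1)$ approximately onto such a net of $X_i$ covering a neighbourhood of $B(x^0_i,j)$, so one may pull back the finitely many harmonic charts there. A diagonal extraction over a countable family of charts that exhausts $Y_\infty$, combined with Arzel\`a--Ascoli (the inclusion $C^{k+1,\alpha}\hookrightarrow C^{k+1,\beta}$ is compact for $\beta<\alpha$), yields a further subsequence along which, chart by chart, the coefficients converge in $C^{k+1,\beta}$ and the overlap maps in $C^{k+2,\beta}$. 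The cocycle identities pass to the limit, so the limiting charts and overlap maps assemble into a $C^{k+2,\alpha}$ manifold $X_\infty$ with a $C^{k+1,\alpha}$ Riemannian metric $g_\infty$; properness of $Y_\infty$ (bounded sets are covered by finitely many limiting charts) makes $(X_\infty,g_\infty)$ complete, and one checks $d^{g_\infty}=d_\infty$ and $x^0_\infty=y^0_\infty$. After a last relabelling of the subsequence, assume the map $\phi_j$ below lands in $X_j$.

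It remains to present the convergence as smooth of order $k$. On each limiting chart the approximate identifications with the corresponding charts of $X_j$ are $C^1$-close to linear isometries and agree up to small error on overlaps, so one glues them to a global map $\phi_j\colon U_j=B(x^0_\infty,j)\to X_j$ by the Riemannian centre-of-mass averaging of Grove--Karcher, using a partition of unity subordinate to the chart cover. Then $\phi_j$ is $C^{k+1}$, $\phi_j\to\id$ pointwise, and $\phi_j^*g_j\to g_\infty$ in $C^{k+1,\beta}$, hence in $C^k$, which gives the exhaustion, the pointwise convergence of diffeomorphisms and the $C^k$-convergence of metrics required by Definition~\ref{def:smooth-convegence}. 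The one genuinely delicate point --- which, together with the harmonic radius estimate, is the technical heart --- is that $\phi_j$ is a \emph{diffeomorphism onto its image} for $j$ large: locally this is the inverse function theorem, since $d\phi_j$ is $C^0$-close to a linear isometry; globally $\phi_j$ is injective because it nearly preserves distances on the scale $j$ and $d^{g_j}$ is $\epsilon_j$-close to $d_\infty$ there, so two points of $U_j$ that are far apart cannot share an image, while two that are close lie in a common chart and are separated by the local statement. All that is then left --- the nested subsequences, the bookkeeping of chart domains, and the verification of the clauses of Definition~\ref{def:smooth-convegence} --- is routine.
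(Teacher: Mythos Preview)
The paper does not supply its own proof of this theorem: it is quoted as a known result, with a citation to Hamilton's paper and to Bamler's thesis for a detailed argument, followed only by the remark that the references treat $k=\infty$ but implicitly contain the finite-$k$ statement. Your proposal therefore cannot be compared to a proof in the paper, because there is none.

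That said, your sketch is a correct outline of one of the two standard routes to this compactness theorem --- the harmonic-coordinate approach in the spirit of Anderson and Jost--Karcher, as opposed to Hamilton's original argument via geodesic (distance) coordinates. The logical structure (GH-precompactness from the Ricci lower bound, uniform $C^{k+1,\alpha}$ control of the metric in harmonic charts by elliptic bootstrap against the $\nabla^\ell\Rm$ bounds, Arzel\`a--Ascoli extraction, and centre-of-mass gluing of the local identifications into global diffeomorphisms) is exactly what the cited references carry out, and the points you flag as delicate --- the harmonic radius lower bound and the global injectivity of the glued map --- are indeed the two places where real work is needed. One small comment: in the bootstrap step you should make explicit that the tensorial bounds $|\nabla^\ell\Rm|\le c$ translate into coordinate H\"older bounds on $(\Ric_g)_{ab}$ only once the metric coefficients themselves are already controlled to the appropriate order, so the induction has to be set up carefully; but this is standard and does not affect the validity of the argument.
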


\begin{remark}
Strictly speaking, only the case $k = \infty$ is treated in the Theorem 3.2.4 of \cite{Bam}, but its proof contains implicitly the statement for finite $k \geq 3$. The necessity for $k \geq 3$ is in the proof of the induced length structure being the one of a Riemannian manifold. There an Arzel\`a-Ascoli argument is applied to the sequence of metrics $\exp^* g_n$. The loss of one order of differentiability occurs at another instance of the application of the Arzel\`a-Ascoli argument when showing that the limit metric is $C^k$.
\end{remark}

\bigskip

\section{Proof of the main result (Theorem A)}

To apply Theorem \ref{Bamler-thm} we need to make the boundary disappear. The main tool to do so are {\em height functions}. The general strategy of the proof of Theorem A can be summarized as follows:

\begin{enumerate}
\item First we extend in Sec. \ref{extension} the metrics beyond the boundary using height functions; 
\item Then we apply in Sec. \ref{sec:convergence} Hamilton's theorem Th. \ref{Bamler-thm} to the extended metrics;
\item Finally we show in Sec. \ref{restriction} that convergence of the extended metric together with convergence of the height functions entails convergence of the original manifolds-with-boundary.
\end{enumerate}

\subsection{Step 1: An extension procedure}
\label{extension}
\subsubsection{Height functions}
In order to reduce the problem of convergence for manifolds-with-boundary to the corresponding problem for manifolds without boundary, we introduce height functions. To a manifold $M$ with non-empty boundary, we attach a collar to get a complete
manifold $X$ without boundary equipped with a height function $f: X \to (- \infty ; 1]$ such that $M=f^{-1}([0 ; 1])$. Then a sequence $i \mapsto (M_i, g_i, x_i^0)$ of
pointed compact manifolds with non-empty boundary induces a sequence
$i \mapsto (X_i, \hat{g}_i, x_i^0)$ (where $\hat{g}_i$ extends $g_i$) of complete
Riemannian manifolds with height functions.

\begin{definition}\label{def:height}
Let $(X,g,x^0)$ be a pointed Riemannian manifold, $k \in \N$, $c>0$, then $f \in C^k(X , (- \infty ; 1 ])$ is called a $(c, k)$-{\bf height function}, if the
following conditions are satisfied:
\begin{enumerate}
\item[(i)] $\delta^{\p}(f)
  := \min\{ \ |\nabla_{g} f(x)|_{g} \ | \ x\in f^{-1}([ - 1/2 ; 
  1/2] ) \}  \geq c^{-1}$, $f^{-1}( \{ 0 \})\neq \emptyset$,
in particular $0$ is a regular value for the function
$f$;
\item[(ii)] $f(x^0)  >0 $ and $d^g(x^0 ,  f^{-1} (0)) \in (c^{-1}; c)$;
\item[(iii)]  $|\nabla^{\ell}f |\leq c$ for
  all $\ell \in \N_k$.
\end{enumerate}
Of course, if $C \geq c $ and $\kappa \leq k $ then any $(c,k)$- height function is a $(C, \kappa)$-height function. Furthermore, a sequence $\{(M_i, g_i, x^0_i, f_i)\}$ is called {\em of
  $(c,k)$-bounded geometry} if $\{(M_i, g_i, x^0_i)\}$ is a
  sequence of $(c,k)$-bounded geometry and $f_i$ are
  $(c,k)$-height functions on $M_i$.
\end{definition}

\begin{remark}
  Let $(X,g,x^0)$ and $f: X\to (- \infty, 1] $ be as in Definition \ref{def:height}.
  Denote $X^f:=f^{-1}([0 ; 1])$.  Then by
    definition, $X^f$ is a smooth
      manifold with the boundary $\p X^f=f^{-1}(\{0\})\neq \emptyset$,
      i.e., the triple $(X^f,g,x^0)$ is a pointed manifold with non-empty
      boundary. Here we denote by $g$ the restriction $g|_{X^f}$ to
      avoid multiple subscripts in sequences. 
\end{remark}

Now it is easy to see that if $k \geq 1$ and if $f$ is a $(c,k+2)$-height
function on a manifold of $(c,k)$-bounded geometry, then $X^f = f^{-1}
([0,1])$ is a manifold with boundary of $(c,k)$-bounded geometry: The $C^{k+2}$-norm of $f$ controls the $C^k$-norm of its Hessianand thus of the second findamental form of $f^{-1} (0)$. It is a bit
harder to see that also the converse is true:
\begin{theorem}
\label{MetricExtension}
Let $ c>0$, then there exists $\bar c>0$, depending on $ c$ and on $k$ such that,
for any compact pointed manifold $(M,g,x^0)$
of $( c,k)$-bounded geometry  with $\partial M \neq \emptyset$, there exists a pointed isometric inclusion $\iota : (M,g,x^0) \to (X,
\bar{g}, x^0)$ where $(X,\bar g, x^0)$ is a complete pointed manifold
of $(\bar c,k)$-bounded geometry and $(\bar{c},
k)$-height function $f$ on $X$ with $\iota(M)=
f^{-1}([0,1]) $.
\end{theorem}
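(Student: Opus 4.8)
The plan is to take $X$ to be $M$ with an infinite cylindrical collar attached to $\partial M$, to extend $g$ across the collar by a controlled modification of its Fermi-coordinate form near $\partial M$, and to let $f$ be a rescaled function of the signed distance to $\partial M$. Concretely: by condition (i) the normal exponential map gives a collar neighbourhood $U$ of $\partial M$ in $M$ diffeomorphic to $\partial M\times[0,c^{-1}]$, in which $g=dr^{2}+h_{r}$ with $h_{0}=\partial g$; the Fermi-coordinate estimates of \cite{Schick} --- consequences of (i), (iv), (v) and the injectivity-radius conditions --- give, with constants depending only on $c$ and $k$, that $(\partial M,\partial g)$ has bounded geometry, that the family $r\mapsto h_{r}$ and the covariant derivatives of $d^{g}(\cdot,\partial M)$ are $C^{k+2}$-bounded on $U$, and that $h_{r}$ is uniformly comparable to $h_{0}$ on $[0,c^{-1}]$. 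Since $\partial M$ is an arbitrary closed manifold one cannot cap the collar off smoothly, so I would set $X:=M\cup_{\partial M}\bigl(\partial M\times(-\infty,0]\bigr)$ --- a manifold with empty boundary, which will be complete --- and define $\bar g$ by: Seeley-extending $r\mapsto h_{r}$ smoothly a small distance $\delta=\delta(c)$ past $r=0$, with $\delta$ small enough to keep it positive definite and comparable to $h_{0}$; interpolating by a fixed cutoff at scale $\delta$ to the constant family $h_{0}$ for $r\le-\delta$; calling the resulting family $\bar h_{r}$ for $r\in(-\infty,c^{-1}]$, with $\bar h_{r}=h_{r}$ for $r\ge0$; and putting $\bar g:=dr^{2}+\bar h_{r}$ on $\partial M\times(-\infty,c^{-1}]$ and $\bar g:=g$ on $M$. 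These agree on $U$ and match smoothly across $\partial M$ (the Seeley extension being a smooth extension of $r\mapsto h_{r}$), so $\bar g$ is a smooth complete metric with $\bar g|_{M}=g$, and $\iota\colon M\hookrightarrow X$ is a pointed isometric closed embedding.

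Next I would verify that $(X,\bar g,x^{0})$ has $(\bar c,k)$-bounded geometry; since $\partial X=\emptyset$ this reduces to $|\nabla^{\ell}\Rm_{\bar g}|\le\bar c$ for $\ell\le k$ and $\inj_{\bar g}\ge\bar c^{-1}$. The curvature bound is bookkeeping: on $M\setminus U$ the metric is $g$, and on $\partial M\times(-\infty,c^{-1}]$ the curvature and its covariant derivatives are universal expressions in $\bar h_{r}$, $\partial_{r}\bar h_{r}$, $\partial_{r}^{2}\bar h_{r}$ and the intrinsic geometry of $(\partial M,\bar h_{r})$, all controlled by the $C^{k+2}$-bounds. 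The injectivity radius is the step I expect to be the main obstacle, and is the entire reason for attaching the collar: hypothesis (iii) yields only $\inj_{g}\ge\tfrac12c^{-1}$ on $M\setminus B(\partial M,\tfrac12c^{-1})$ and degenerates near $\partial M$. For $p$ with $d^{\bar g}(p,\partial M)\ge\tfrac12c^{-1}$, any short geodesic loop at $p$ stays in $M$ and contradicts $\inj_{g}(p)\ge\tfrac12c^{-1}$, so $\inj_{\bar g}(p)$ is bounded below in terms of $c$ and $\bar c$. For the remaining $p$, where $\rho(p)<\tfrac34c^{-1}$ ($\rho$ = signed distance to $\partial M$), the ball $B^{\bar g}(p,\tfrac18c^{-1})$ lies in $\partial M\times(-\infty,c^{-1}]$, on which $\bar g$ is bi-Lipschitz --- with constant depending only on $c$ --- to the product $dr^{2}+h_{0}$; since $(\partial M,h_{0})$ has bounded geometry this product is volume non-collapsed at a uniform scale, hence so is $\bar g$ near $p$, and the curvature bound together with the classical injectivity-radius estimate of Cheeger (resp.\ Cheeger--Gromov--Taylor) then forces $\inj_{\bar g}(p)\ge\bar c^{-1}$.

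Finally, for the height function I would fix once and for all a smooth nondecreasing $\Psi\colon\R\to[-1,1]$ with $\Psi\equiv-1$ on $(-\infty,-\tfrac12]$, $\Psi\equiv1$ on $[\tfrac12,\infty)$, $\Psi(0)=0$ and $\Psi'>0$ on $(-\tfrac12,\tfrac12)$, and set $f:=\Psi(c\rho)$ wherever $\rho<c^{-1}$ (there $\rho$ is smooth) and $f:=1$ elsewhere on $X$; these agree on the overlap because $\Psi\equiv1$ for argument $\ge\tfrac12$, so $f$ is smooth with values in $(-\infty,1]$. Then $f^{-1}(0)=\partial M$ and $f^{-1}([0,1])=\{\rho\ge0\}=\iota(M)$; on $f^{-1}([-\tfrac12,\tfrac12])$ one has $|\nabla f|=c\,\Psi'(c\rho)$, bounded below by $c$ times a positive universal constant; and $|\nabla^{\ell}f|\le C(c,k)$ for $\ell\le k$ by the chain rule and the $C^{k+2}$-bounds on $\rho$. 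Enlarging $\bar c$ once more, $f$ is a $(\bar c,k)$-height function; the one delicate point in this last check is condition (ii), where $d^{\bar g}(x^{0},f^{-1}(0))=d^{g}(x^{0},\partial M)\ge2c^{-1}$ gives the lower bound while the upper bound is obtained by letting $\bar c$ also dominate $d^{g}(x^{0},\partial M)$ --- the one place where $\bar c$ depends on slightly more than $c$, and a harmless one in the intended applications, where the basepoints stay at uniformly bounded distance from $\partial M$.
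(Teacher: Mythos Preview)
Your argument is correct and follows the same overall strategy as the paper --- attach an infinite collar along $\partial M$, Seeley-extend the metric across it, and take a cutoff of the normal coordinate as the height function --- but the execution differs in two places. First, the paper extends the metric chart by chart in Schick's normal-collar atlas: in each chart $U_\ell$ it writes $g_\ell=e_\ell\circ A_\ell$ for a positive symmetric operator $A_\ell$, Seeley-extends the entries of the matrix logarithm $\mathbf a_\ell=\ln A_\ell$, exponentiates back to guarantee positive-definiteness, and patches the resulting local metrics with the (extended) partition of unity; you instead keep the global Fermi form $g=dr^2+h_r$, Seeley-extend the family $r\mapsto h_r$ a fixed distance $\delta$ and interpolate to the product $dr^2+h_0$, thereby avoiding both the partition of unity and the log--exp device. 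Second, the paper verifies bounded geometry of $(X,\bar g)$ via Schick's chart criterion (two-sided bounds on $\bar g_\ell\circ e_\ell^{-1}$ together with $C^k$-bounds on the components), so no direct injectivity-radius argument ever appears; you instead check $|\nabla^\ell\Rm_{\bar g}|$ from the Riccati/Gauss--Codazzi formulas and recover $\inj_{\bar g}$ via volume-noncollapse and the Cheeger--Gromov--Taylor estimate. The paper's route is more modular (it literally reduces to the scalar extension Lemma~\ref{Harnack-extension}) and plugs directly into Schick's framework; yours is more geometric and self-contained. One small point: your Case~1 for $\inj_{\bar g}$ assumes that short $\bar g$-geodesic loops at $p$ stay in $M$, which is not immediate once a collar has been attached; the cleanest fix is simply to run the volume/CGT argument uniformly, noting that for $p$ deep in $M$ the $g$-ball (hence the larger $\bar g$-ball) already has a volume lower bound from condition~(iii). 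Finally, your remark about height-function condition~(ii) is accurate, and the paper's proof is equally silent on it; in the application to Theorem~A this is absorbed into the additional hypothesis that $d_i(x_i^0,\partial M_i)$ is bounded.
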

The proof of this theorem in Subsection \ref{sec:MetricExtension} requires the careful extension of tensors. In a first step, we want to consider an extension procedure for scalar functions instead of tensors.

\subsubsection{Extending functions beyond a boundary}
We will first need a technical result allowing us to extend
functions beyond the boundary of a manifold in a way
that respects infima. To that purpose, let $(M,g,x^0)$ be a
pointed Riemannian manifold  of
$(c,k)$-bounded geometry.

We would like to construct a standard outer collar
to $M$. First, we recall necessary constructions from
\cite{Schick}. Let $(M,g)$ be a Riemannian manifold with non-empty
boundary $\p M$ equipped with the metric $\p g= g|_{\p M}$. We denote
by $\vec \nu$ the inward normal vector field along $\p M$. Then for a
point $x \in \p M$ we fix an orthonormal basis on the tangent space
$T_{x}\p M$ to identify it with $\R^{n-1}$. Then for small enough
$r_1,r_2>0$ there are normal collar coordinates
\begin{equation}\label{eq:kappa}
\kappa_{x}: B(0, r_1)\times [0,r_2) \to M, \ \ \
\kappa_{x}: (v,t) \mapsto \exp^{g}_{\exp_{x}^{\p g}(v)}(t\vec \nu),
\end{equation}
where the exponential maps of $\p M$ and of $M$ are composed. By
assumption, the manifold $(M,g)$ has $(c,k)$-bounded
geometry, in particular, the boundary $(\p M,\p g)$ also has
$(c,k)$-bounded geometry. Let $\delta >0$ s.t. $\p M
\times [0,\delta)$ is
covered by normal collar coordinates charts $U_{\ell}$ with
\begin{equation*}
U_{\ell}= \kappa_{\ell}(V_\ell);
\qquad V_\ell:= W_\ell \times
[0,r_2 ), \qquad W_\ell:= 
B(0, r_1^{(\ell)})\end{equation*}
where $\kappa_{\ell}$ is the corresponding map from
(\ref{eq:kappa}). Since the manifold $(M,g)$ has
$(c,k)$-bounded geometry, \cite[Proposition 3.2]{Schick} implies that
there exist constants $r_0>0$ and $c_0$ and $m_0 \in \N^*$
depending only on $c$ and $k$, such that if $r_1,r_2\leq r_0$ the family of charts $\{\ \kappa_{\ell} \ \vert \ \ell \in
\Lambda \ \}$ can be chosen locally finite (this finiteness is
controlled by $m_0$), and there is a subordinate
partition of unity $\{ \ \psi_{\ell} \ \vert \ \ell \in
\Lambda \ \}$ satisfying
\begin{eqnarray}\label{eq:bound-b}
\vert \psi_{\ell}  \vert_{C^k} < c_0 ,
\end{eqnarray}
where, again, $c_0$ only depends on $c$ and $k$, whereas $ r_0, m_0 $ do not depend on $k$ but only on the first three coefficients of bounded geometry. We fix
this atlas $\{\ U_{\ell} \ \vert \ \ell \in \Lambda \ \}$ of the collar and the subordinate
partition of unity $\{ \ \psi_{\ell} \ \vert \ \ell \in \Lambda \ \}$  once and forever, as they have the same favorable properties for metrics close
to $g$ as well. Now the atlas $ \bigcup_i \kappa_i \cup \bigcup_j \kappa^{{\rm int}}_j$ (where the $\kappa^{{\rm int}}_j$ are charts for the interior) 
can be extended to an atlas $ \bigcup_i (\hat{\kappa}_i: \hat{U}_i \rightarrow \hat{V}_i) \cup \bigcup_j  \kappa^{{\rm int}}_j$ of a manifold $X$ without boundary diffeomorphic to
the interior of $M$
by extending the smooth chart transitions from
$$
V_{ij} = W_{ij} \times [0, r_2) \rightarrow V_{ji} = W_{ji} \times [0, r_2)  \ \ \ \mbox{to} \ \ 
W_{ji} \times (- \infty , r_2) \rightarrow W_{ji} \times ( -
\infty , r_2)
$$ (where $V_{ij} := \kappa_i^{-1} (U_i \cap
U_j)$) providing gluing data for a
manifold $X$ preserving the bounds
(\ref{eq:bound-b}) for the chart transitions, as the charts considered in \cite{Schick}, Th. 2.5 are normal boundary charts, whose chart transitions are of the form $(\kappa, \id_{[0;1]}  )$ for a chart $\kappa $ ofd the boundary. The order\footnote{The {\em order} of a cover is the maximal number of open sets in which a point is contained.} of the new cover $\{ \hat{U}_\ell\}$ is the order of the original cover $\{ U_{\ell} \}$ of the boundary by bopundary charts. We call $\{ U_\ell\}$ the {\em cylindrical atlas} and $\{ \hat{U}_\ell \}$ the {\em extended cylindrical atlas}. Each $\psi_i $ constructed in \cite{Schick} with $i \geq 0$, i.e., corresponding to a boundary chart neighborhood, is of the form $\psi_i \o \kappa_i^{-1} (y',t) = \phi (y') \psi (t) $ with $\psi =1$ near $0$, i.e., the $f_i$ do not depend on the neck parameter $t$ near $\partial M$, and we extend them constantly in $t$ to functions $\hat{\psi}$, which, consequently, form a partition of unity in $X$ subordinate to the extended cylindrical atlas. 

Let $(X, p)$ be the above extension of the manifold $M$, let $h$ be a complete Riemannian metric on $X$. Let $0<r \leq \infty $, define $B_r := B(x^0,r)
\subset M$. Let $\Lambda_r \subset \Lambda$ be the subset of boundary
chart domains of the fixed atlas of $M$ contained in
$B_r$. Then we define $\p_r M:= \bigcup_{\ell \in \Lambda_r} U_\ell$
and let $X_r := M \cup \bigcup_{i \in \Lambda_r} \hat{U}_i $.
\begin{lemma}
\label{Harnack-extension}{\bf (Stable nonlinear extension operator)}
Let $(X,x^0)$ be the extension of the manifold-with-boundary $M$ as above and let $h$ be a complete Riemannian metric on $X$, which we w.l.o.g. assume to satisfy $\kappa_i^* h> m_0  e_i$ in every chart $\kappa_i$ (where $e_i$ is the Euclidean metric in the chart $\kappa_i$). Then there
is $F: C^0(M, (0; \infty)) \rightarrow C^0(X
, (0; \infty))$ s.t. $F(C^k(M, (0; \infty))) \subset C^k(X, ((0; \infty)))$ and
\begin{enumerate}
\item[{\rm (i)}] the map $F$ is an extension operator, i.e., $  F (u)
\vert_{M} = u $ for all $u \in C^0 (M, (0; \infty))$, and $F_r(u) := F(u) \vert_{X_r} $ only depends on $u \vert_{B_r}$;
\item[{\rm (ii)}] for each $k\geq 1$ and each $b>0$, $F_r$ maps the
space $C^k(B_r , (b; \infty))$ to $C^k( X_r  , \R) $
continuously with respect to the $C^k (B_r)$-norms;
\item[{\rm (iii)}] for each $k\geq 1$, $F_r$ maps $C^k (B_r)$-bounded sets uniformly to
$C^k $-bounded sets, i.e., for every $a>0$ there is a constant $c_1>0$ 
such that
\begin{equation*}
F_r(B^{C^k(B_r)}(0,a) ) \subset B^{C^k (X_r)}_{r\cdot c_1} (0) .
\end{equation*}

\end{enumerate}
Finally, for every $b>0$ there is a constant $\beta \in (0;b)$,
$\beta=\beta(b)$, such that the bound $ \inf (u \vert_{ B_r} )
\geq b$ implies the bound $ \inf (F_r(u) \vert_{X }
)\geq \beta $, uniformly in a $C^k (B_r)$-neighborhood of metrics.
\end{lemma}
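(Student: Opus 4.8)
The plan is to build $F$ locally, chart by chart in the extended cylindrical atlas of $X$, and then patch with the \emph{fixed} partition of unity $\{\psi_\ell\}$, whose $C^k$-bounds $c_0$ and local-finiteness constant $m_0$ depend only on $c$ and $k$; this is the device that makes every estimate uniform (and stable under $C^k$-small perturbations of $h$). In a boundary chart with normal collar coordinate $t\in[0,r_2)$, pulling back $u$ gives a positive $C^k$ function of $(v,t)$ defined for $t\geq 0$, and the whole problem reduces to extending such a function to $t\in(-\infty,r_2)$ so that it stays $C^k$ and matches all derivatives across $t=0$, stays positive, has controlled $C^k$-norm, and does not drop its infimum too much.

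For the local extension I would reconcile positivity with smoothness by passing to logarithms. Set $w:=\log u$ (finite on $\{t\geq 0\}$ since $u>0$) and extend $w$ past $t=0$ by a fixed Hestenes/Seeley-type reflection $R$: a bounded linear operator from $C^k$-functions on $[0,r_2)$ to $C^k$-functions on $(-\infty,r_2)$ that reproduces constants, matches derivatives up to order $k$ at $0$, and is cut off so that $Rw\equiv 0$ for $t\leq -r_2/2$. Then $\tilde u:=\exp(Rw)$ is positive, equals $u$ for $t\geq 0$ (so the local pieces agree on $M$), is $C^k$ and $C^k$-matches $u$ across $t=0$, equals $1$ deep in the collar, and $\|\tilde u\|_{C^k}$ is controlled by $\|u\|_{C^k}$ together with $\inf u$ — a dependence on $\inf u$ is unavoidable for any positivity-preserving $C^k$ extension, as the example of an affine $u$ with large slope and tiny boundary value shows, and it is harmless here since (ii) and the final assertion both carry a lower bound $b$. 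I would then patch: $F(u):=\sum_\ell\hat\psi_\ell\,\tilde u_\ell+\psi_{\mathrm{int}}\,u$ with $\{\hat\psi_\ell,\psi_{\mathrm{int}}\}$ the partition of unity of $X$ that extends the fixed one. As a convex combination of positive functions $F(u)>0$; on $M$ it equals $u$, which is the first half of (i); the second half — $F_r(u)$ reads only $u|_{B_r}$ — holds because $R$ is supported near $t=0$ and the atlas is locally finite with uniformly bounded chart domains, so $F(u)|_{X_r}$ only uses $u$ on a bounded enlargement of $B_r$, which after a harmless adjustment of the radius lies inside $B_r$.

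For (ii) and (iii) I would chase the three maps $u\mapsto\log u\mapsto R(\log u)\mapsto\exp(R(\log u))$ through the appropriate $C^k$-spaces: on $C^k(B_r,(b,\infty))$ the first map is $C^k$-to-$C^k$ continuous because $\log$ is smooth on $(b,\infty)$, $R$ is bounded linear, and $\exp$ is continuous on $C^k$-bounded sets; the finite patching sum and the extended chart transitions (which are $C^k$-bounded by construction of $X$) preserve continuity, and everything persists for metrics $C^k$-near the reference $h$ because the collar charts depend smoothly on the metric and the bounded-geometry bounds are open conditions. The uniform bound in (iii) comes the same way once one restricts to a $C^k(B_r)$-bounded (and, as above, lower-bounded) set, summing over the finitely many charts — controlled by $m_0$ — that meet $X_r$; the factor $r$ in the target radius is a safe over-estimate.

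The infimum statement is the payoff of the logarithmic coordinates: if $\inf_{B_r}u\geq b$, then $w=\log u\geq\log b$ on the collar region that is actually used, and since $R$ reproduces constants while its negative part has total mass $\gamma=\gamma(k)$ one gets $Rw\geq(1+\gamma)\log b-\gamma\sup w$, hence $\tilde u=\exp(Rw)\geq\beta$ for a positive $\beta$; on the remainder of $X$, $F(u)$ is a convex combination of values $\geq b$, of such values $\geq\beta$, and of the constant $1$, so $F(u)\geq\min(b,\beta,1)\geq\beta$ everywhere, with $\beta$ depending only on $b$ once the $C^k$-bounds of (iii) are in force and uniformly over nearby metrics. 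I expect the main obstacle to be exactly this tension — a positivity-preserving $C^k$ extension cannot be $C^k$-bounded independently of $\inf u$, which forces the construction through the logarithm — together with the bookkeeping of checking that the $\log$--reflect--$\exp$ sandwich, the fixed partition of unity and the extended chart transitions all combine with constants depending only on $c$, $k$ (and $b$), and that the final $\beta$ really depends on $b$ alone.
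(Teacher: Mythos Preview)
Your proposal is correct and follows essentially the same route as the paper: a Seeley/Hestenes reflection in the normal collar charts, globalized via the fixed bounded-geometry partition of unity, and conjugated by $\log$/$\exp$ to guarantee positivity and the uniform lower bound $\beta(b)$. The only cosmetic difference is the order of operations---the paper first patches the local Seeley operators into a single linear extension $E_M$ and then sets $F(u):=\exp\bigl(E_M(\ln u)\bigr)$, whereas you exponentiate chart by chart and then take a convex combination with $\{\hat\psi_\ell,\psi_{\mathrm{int}}\}$; either ordering delivers (i)--(iii) and the infimum estimate.
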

\begin{proof}
Let $\R_+^{n}= \R^{n-1}\times [0,\infty)$
and let $C^{\infty}(\R_+^{n})$ carry the topology of uniform convergence on compact subsets of all
derivatives. In \cite{rS}, Seeley defines a continuous linear
extension operator $E: C^{\infty}(\R_+^{n})\to
C^{\infty}(\R^{n})$, as a sum of dilated reflections. Let $a,b: \N\rightarrow \R$ with $b_k < 0 \ \forall k \in \N$, $b_k \rightarrow_{k \rightarrow \infty} - \infty$ and, for all $n \in \N$: $\sum_{k=0}^{\infty} |a_k| \cdot |b_k|^n < \infty$ and $\sum_{k=0}^{\infty} a_k b_k^n = 1   $, let $\Phi \in C^\infty ([0; \infty], \R)$ with $\supp (\Phi) \subset [0;2] $ and $\Phi |_{[0;1]} = 1$. Then we define $E(f) (x,t) = \sum_{k=0}^\infty a_k \Phi (b_k t) f(n b_k t)$. It is obvious from the properties of $a,b$ for $n=0$ that $E(1) =1$.

 Denote by $\Phi \in C^\infty (\R^{n-1}\times [0;r_2), 
\R^{n-1}\times [0;\infty))$, defined by $\Phi (x, r) := (x, \psi
(r))$ for $\psi \in C^\infty ( [0; r_2) , [0; \infty)) $ is a
stretching diffeomorphism.  Define the operator $E_2:= E
\circ \Phi: C^{\infty}(\R^{n-1}\times [0;r_2)) \to C^\infty
(\R^n)$. 
We define an extension operator $E_M: C^\infty( M )
\rightarrow C^\infty(X)$ by 

\begin{equation*}
E_M(u) := \sum \psi_i \cdot (E_2(u \circ \kappa_i) \circ \hat{\kappa}_i^{-1}) . 
\end{equation*}
It is well-defined for the same reason as above, and by inspection, it is clear from \cite{rS} that $E_M$ satisfies the
above properties (i), (ii) and (iii) (here we use that the respective metrics in every boundary chart of $M$
satisfy $C^k$-bounds with respect to the Euclidean
metric on open subsets of half-spaces.).

To construct an extension respecting lower bounds, we define:
\begin{equation}
	\label{Positive-Extension}
F (u)  (x):= \exp (E_M (\ln u (x))) .
\end{equation}
The properties (i)--(iii) are transferred from $E_M$ to
$F$ by uniform continuity of
$\ln\vert_{[\sigma; \infty)}$ for any $\sigma>0$. As
$\ln\vert_{[\sigma; \infty)}$ is bounded away from $-\infty$, there
is $\beta=\beta(b)$ with $ \inf (F(u)
\vert_{X_r} )\geq \beta $ if $ \inf (u
\vert_{ B_r} ) \geq b$.
\end{proof}

\subsubsection{Proof of Theorem \ref{MetricExtension}}
\label{sec:MetricExtension} We extend the proof of Lemma \ref{Harnack-extension} to endomorphism-valued functions: Let $\kappa_\ell : V_\ell \rightarrow U_\ell $ be a member of the cylindrical atlas, denote by $g_{\ell}$ the metric $g$ restricted to $U_{\ell}$, and
by $e_{\ell}=(\kappa_{\ell}^{-1})^*(ds^2)$, where $ds^2$ is the
Euclidean metric on $B(0, r_1^{(\ell)})\times [0;r_2)$.
For each $\ell$, we define
\begin{eqnarray}
A_{\ell} := e^{-1}_{\ell} \circ g_{\ell}  : TU_{\ell} \rightarrow TU_{\ell}
\end{eqnarray}
(the metrics understood as maps $TU_{\ell} \rightarrow T^*
U_{\ell}$). The $A_{\ell}$ are positive-definite symmetric
operators, so their spectrum is contained in $(0; \infty)$. By
\cite[Proposition 2.3]{Schick}, there is
$a_0 \in (0; \infty)$ with $\vert \vert A_{\ell}  (p) \vert \vert, || A^{-1}_{\ell} (p) || \leq a_0 $ for all $p$. This allows to put ${\mathbf a}_{\ell}:= \ln (A_{\ell})$,
which are smooth maps from $U_\ell$ to the set of symmetric matrices
${\rm Mat}_s(\R^n, \R^n)$ whose operator norm is bounded by $\ln a_0$. By $F$ from
Lemma \ref{Harnack-extension}, Eq. \ref{Positive-Extension}, extend the coefficients of each
matrix ${\mathbf a}_{\ell}$ to the members of the extended atlas
\begin{equation*}
\hat{U}_{\ell} :=  \kappa_\ell (V_\ell) , \qquad V_\ell:= B(0, r_1^{(\ell)})\times
(- \infty ; r_2).
\end{equation*}
This gives maps $\hat{\mathbf a}_{\ell} : \hat{U}_\ell \rightarrow
{\rm Mat}_s(\R^n, \R^n) $, such that $\hat{\mathbf
	a}_{\ell}|_{U_{\ell}}={\mathbf a}_{\ell}$.

Equally we define $\hat{\psi}_{\ell} $ as the Seeley extensions $E_M(\psi_i)$ of the
partition of unity $\psi_i$ fixed above.  Then we define $ \hat{A}_{\ell} :=
\exp(\hat{\mathbf a}_{\ell}) $, which is a positive-definite symmetric
smooth extension of $A_{\ell} $. Thus we can define the Riemannian
metric $\hat{g}_{\ell}:= e_{\ell}\circ \hat{A}_{\ell}$ on $ \hat{U}_{\ell} $. Finally, we
put $ \hat g:= \sum \hat{\psi}_{\ell} \cdot \hat{g}_{\ell}$, which is
a metric on the manifold $X$ which we now show to be complete. Let
$e_{\ell}$ be the Euclidean metric in $\hat U_{\ell}$,
and, for $r_1$ and $r_2$ as in the definition of bounded geometry define the metric $\bar g_{\ell}:=\hat \kappa_{\ell}^* \hat g$ on
\begin{equation*}
B(0, r_1^{(\ell)})\times (- \infty ;r_2) \subset \R^n.
\end{equation*}
Let $\bar A_{\ell}:= \bar g_{\ell} \circ e_\ell^{-1}$, then the operator norms of the
$\bar A_{\ell}$ are bounded above. But also the norm of the inverses is bounded, as by the definition of bounded geometry, for 
each point $x\in \hat M$, there are at most $m_0$ neighborhoods
$U_{\ell}$ with $x\in U_{\ell}$, thus there is an index
${\ell}_0$ such that $\psi_{\ell_0}(x) \geq m_0^{-1}$, therefore:
\begin{eqnarray}
e( \bar{A}_{\ell} v , v) = \sum_{\ell'} \hat \psi_{\ell'}
\hat{A}_{\ell'} (v,v) \geq m_0^{-1} \hat{A}_{\ell_0} (v,v)
\end{eqnarray}
for some $\ell_0$, as all summands are positive. Now $\hat{A}_{\ell_0}
(v,v)$ in turn can be estimated by
\begin{eqnarray}
\hat{A}_{\ell_0} (v,v) \geq \beta(a_0^{-1} ) \cdot  \vert  v \vert  ,
\end{eqnarray}
which implies completeness via the Cauchy criterion. These and the more obvious\footnote{Uniform $C^k$ bounds to the $A_\ell$ imply uniform $C^k$ bounds to the $a_\ell = \l (A_\ell)$ which, via the Seeley extension, imply uniform $C^k$ bounds to the $\hat{a}_\ell$ and thus uniform $C^k$ bounds to the $\hat{A}_\ell := \exp (\hat{a}_\ell)$, so by the bound on the order of the extended cylindrical atlas and the uniform $C^k$ bounds on the $\psi_\ell$ imply uniform $C^k$ bounds on the $g_\ell$ and thus on $\overline{g}_\ell$.} $C^k$-bounds from $\infty$ are exactly the estimates needed to show $(c,k)$-bounded geometry of $(X, \hat{g}) $. As a height function we take, for $\tau \in C^\infty ((- \infty; r_2] ) $ with $\tau (r) = r $ for all $r \in (- \infty; r_2/4]$ and $\tau([r_2/2; r_2] ) = r_2/2$, 
\begin{eqnarray}
f:= \sum_{\ell} \psi_{\ell} \cdot  (\tau \circ {\rm pr}_2 \circ \kappa_{\ell}),
\end{eqnarray}
complemented by $r_2/2$ in the interior, which after a final rescaling (to have the required range $(- \infty; 1]$ and satisfy the bound from zero on $f^{-1}([- 1/2;1/2] )$) satisfies all requirements: $\pr_2 \o \kappa_{\ell}$ is one of the coordinates, called $x_0$ in \cite{Schick}, whose $C^k$ norm is uniformly bounded by a constant $\overline{c}$ called $D$ in \cite{Schick} (Th. 2.5.(b1)), that depends only on the constant $C$ as in $(C,k)$-bounded. \hfill \qed

\subsection{Step 2: Convergence with height functions}
\label{sec:convergence}

\begin{theorem}\label{thm:convergence}
Let $i \mapsto (X_i,g_i,x^0_i,f_i)$ be a sequence of complete pointed $C^k$
manifolds equipped with height
functions of $(c,k+1)$-bounded geometry with $ c>0$, $k\geq 4$.  Then the sequence $i \mapsto (X_i,g_i,x^0_i,f_i)$
 $C^k$-subconverges\footnote{We can even conclude $C^{k, \alpha}$ subconvergence by the used Arzel\`a-Ascoli arguments but let us focus on $C^k$ spaces.} to $(X_{\infty},g_{\infty},x^0_{\infty},f_{\infty})$,
\ where $(X_{\infty},g_{\infty},x^0_{\infty}) $ \ is a complete open
manifold, and $f_{\infty}: X_{\infty}\to (-\infty, 1]$ is a
$(c,k )$-height function.
\end{theorem}

\noindent {\em Proof}. Let $i \mapsto (X_i,g_i,x^0_i,f_i)$ be a sequence from Theorem
\ref{thm:convergence}. By Theorem \ref{Bamler-thm} we may assume that the sequence of
manifolds $i \mapsto (X_i,g_i,x^0_i)$ already
$C^k$-converges to a pointed complete Riemannian manifold
$(X_{\infty},g_{\infty},x^0_{\infty})$ of dimension $n$. Without loss of
generality, we can also assume that the exhaustions of $X_{\infty}$ is
chosen as a systems of open balls $\{B(x^0_{\infty}, i) \vert i \in \N \}$.  Let $\phi_i
:B(x_{\infty}, i)\to X_i$ be the diffeomorphisms (on their image) from
Definition \ref{def:smooth-convegence}.

We recall that $f_i : X_i \to \R$ are
$(c,k+1)$-height functions as in Definition
\ref{def:height}.  By definition, we have $|\nabla^{\ell}f_i|\leq
    c$ for all $\ell=0,1,\ldots, k+1$.  Then by passing to
    a subsequence if necessary, the functions $\tilde{f}_i:=
    \phi_i^* f_i$ also $C^k$-converge to a $C^k$ function $f_{\infty}:
    X_{\infty}\to \R$ on the ball $B(x^0,n) $, and then we choose a diagonal sequence to get convergence on every ball. By assumptions on the sequence of functions
    $i \mapsto f_i $, it is evident that the function $f_{\infty}$ is also a
    $(c,k)$-height function, and since $\delta^{\p}(f_i)\geq c^{-1}$
    for all $i=1,2,\ldots$ (where $\delta^\partial $ is defined as in Definition \ref{def:height}), we obtain that
    $\delta^{\p}(f_{\infty})\geq c^{-1}$.  This completes the proof of
    Theorem \ref{thm:convergence}. \hfill \qed

\subsection{Step 3: Coming back to manifolds-with-boundary}
\label{restriction}

\begin{theorem}
\label{cutting}
Let $(X_i, g_i, x_i)$ be a $C^k$-convergent sequence and let $f_i: X_i \rightarrow \R $ be $(c,k)$-height functions, then the sequence $ i \mapsto (M_i := X_i^{f_i}, g_{M_i}, x^0_i)$ of manifolds with boundary $C^{k-1}$-subconverges to $(M_\infty := X_\infty^{f_\infty}, g_{M_\infty}, x^0_\infty) $.
\end{theorem}

\begin{proof} The manifolds $(M_i := X_i^{f_i},
g_i , x^0_i)$ Hausdorff-converge (as closures of open subsets of the limit manifold) and thus Gromov-Hausdorff-converge to
$(M_{\infty} := X_{\infty}^{f_\infty}, g_{\infty}, x^0_{\infty})$ as pointed metric spaces. However, we need more: we need to prove Cheeger-Gromov convergence.
Now, for any fixed radius $r$ and $i>r$, we construct diffeomorphisms $D_i^r$ from the manifold with height function $ H:= B (x^0_\infty, r) \cap f_\infty^{-1} (( - 1/2,  1/2)) $ to an open set in $B (x^0_\infty,r) \cap \tilde{f}_i^{-1} (( - 1/2,  1/2)) $ (recall that $x^0_\infty = \Phi_i^{-1} (x^0_i)$ and that the $\tilde{f}_i$ are defined as in Section \ref{sec:convergence}) by means of the gradient flows of $\tilde{f}_i$:
\begin{equation*}
D_{i, \p}^r (x) := {\rm Fl}_{{\rm grad} \tilde{f}_i}^{t(x)} (x) \ \forall x \in H , 
\end{equation*}
\V where $t(x)$ is chosen such that 
\begin{equation*}
\tilde{f}_i( {\rm Fl}_{{\rm grad} \tilde{f}_i}^{t(x)} (x))  = f_\infty(x).
\end{equation*}
In general, $t(x)$ can depend on $i$. It is easy to see that $t$ is a smooth function (using the product decomposition of a neighborhood $U$ of $\tilde{f}_i^{-1} (0)$ given by the gradient flow of $\tilde{f}_i$ and the fact that $B (x^0_\infty, r) \cap f_\infty^{-1}((- 1/2, 1/2)) \subset U$), and $D_{i, \p}^r$ is a diffeomorphism onto its image. Standard integral estimates yield
\begin{equation*}
\vert  \tilde{f}_i (  {\rm Fl}_{{\rm grad} \tilde{f}_i}^{t(x)} (x) ) - \tilde{f}_i (x) \vert \geq t(x) \cdot \delta^\p (\tilde{f}_i )  \ \forall x \in H,
\end{equation*}
($\delta^\partial$ defined as in Definition \ref{def:height}) and with this uniform flow time estimate and the $C^k$-estimates on
${\rm grad} \tilde{f}_i$, we get $C^k$-bounds of the $D_{i, \p}^r$
tending to $0$. Now $D_{i, \p}^r$ is a diffeomorphism from $H$ to its
image, which is in $ B(x^0_\infty, r+1) \cap X_\infty^{f_\infty} $. For $i$ large
enough, we get $d(D_{i, \p}^r y, y) $ smaller than the convexity
radius in $B(x^0_i, r+1)$. This allows us to interpolate $D^r_{i, \p} $
geodesically with the identity in ${\rm int} (M)$: We define
\begin{equation*}
D_i^r (y) := \exp_y ( \phi (y) \cdot \exp_y^{-1} (D_{i, \p}^r (y)) )
\end{equation*}
for $y \in B_r (x_\infty) \cap \tilde{f}_\infty^{-1} ( - 1/2, 1/2)$
and for a smooth function $\phi$ supported in $B (x^0_\infty,r) \cap
\tilde{f}_\infty^{-1} ( - 1/2, 1/2)$ and identical to $1$ in a
neighborhood of $\tilde{f}_{\infty}^{-1} (0)$, and extended by $D_i^r
(y) =y$ on the complement, getting a sequence of diffeomorphisms from
$M_i \cap B(x^0_i, r)$ as above that $C^k$-converges as well. The image of $D_i^r$
is still contained in $ B(x^0_i, r+1) \cap X_\infty^{f_\infty} $ and
contains $B_{r-1} \cap X_\infty^{f_\infty}$, which allows to show $M_i
\rightarrow_{C^k} M_\infty$.
\end{proof}

\subsubsection{Proof of Theorem A}

Now let us prove Theorem A. Assume that we are given a sequence of
pointed manifold with boundary $(M_i,g_i, x^0_i)$ of $( c,k+2)$-bounded geometry. Then we can extend every $(M_i,g_i, x^0_i)$
to a pointed boundaryless manifold $(X_i,\overline{g}_i, x^0_i, f_i)$ as
in Theorem \ref{MetricExtension}, yielding a sequence of $(c_2, k+2)$-bounded geometry, for some $c_2>0$. Then Theorem \ref{thm:convergence}
implies that there is a $C^{k+1}$-convergent subsequence for both manifolds and
height functions, also denoted by $(X_i,\overline{g}_i, x^0_i,
f_i)$. Finally, Proposition \ref{cutting} implies that, in the $C^k$ sense,

\begin{equation*}
\lim_{i \rightarrow \infty} (M_i, g_i, x^0_i) = \lim_{i \rightarrow \infty} (X_i^{f_i}, \overline{g}_i, x^0_i) = (X_\infty^{f_\infty} , \overline{g}_\infty, x^0_i ) .
\end{equation*}

\V The last assertion follows from the existence of a zero locus of the height function, which in turn follows from the fact that the base point is mapped to a positive value and that there are also negative values in the image of the limiting function due to the definition of convergence. \hfill \qed


\newpage

\end{document}